\setlist[enumerate]{itemsep=0mm,parsep=2mm}
\newtheorem{thm}{Theorem}
\newtheorem{lem}[thm]{Lemma}
\newtheorem{defn}[thm]{Definition}
\newtheorem{conj}[thm]{Conjecture}
\newtheorem{question}[thm]{Question}
\newcommand{\R}{\mathbb{R}}
\begin{document}

\title{On generic universal rigidity on the line} 
\author{Guilherme Zeus Dantas e Moura\thanks{Department of Mathematics and Statistics, 
 Haverford College, 370 Lancaster Ave,
 Haverford, PA 19041, USA.
 e-mail: \texttt{zeusdanmou@gmail.com}
 }\and
 Tibor Jord\'an\thanks{
 Department of Operations Research,
 ELTE E\"otv\"os Lor\'and University, and the 
 ELKH-ELTE Egerv\'ary Research Group on Combinatorial Optimization, E\"otv\"os Lor\'and Research Network (ELKH),
 P\'azm\'any P\'eter s\'et\'any 1/C,
 1117 Budapest, Hungary.
 e-mail: \texttt{tibor.jordan@ttk.elte.hu}
 }\and
 Corwin Silverman\thanks{
 Department of Mathematics,
 Grinnell College, 1115 8th Ave,
 Grinnell, IA 50112, USA.
 email: \texttt{silvermanc79@gmail.com}
 }
}

\maketitle


\begin{abstract}
A $d$-dimensional bar-and-joint framework $(G,p)$
with underlying graph $G$
is called universally
rigid if all realizations of $G$ with the same edge lengths, in all
dimensions, are congruent to $(G,p)$. A graph $G$ is said to be
generically universally rigid in $\R^d$ if every $d$-dimensional generic framework $(G,p)$ is universally
rigid.

In this paper we focus on the case $d=1$. We give counterexamples to a
conjectured characterization of generically universally rigid graphs from \cite{conwshop}.
We also introduce two new operations that preserve the universal rigidity of
generic frameworks, and the property of being not universally rigid,
respectively. One of these operations is used in the analysis of one of our
examples, while the other operation is applied to obtain a lower bound on
the size of generically universally rigid graphs. This bound gives
a partial answer to a question from \cite{JN}. 

\vspace{1em}
\noindent {\bfseries Keywords: rigid graph, universally rigid graph, generic framework}
\end{abstract}

\section{Introduction}

A $d$-dimensional (bar-and-joint) {\it framework} is a pair $(G,p)$, 
where $G=(V,E)$
is a graph and $p$ is a {\it configuration} of the vertices, that is, a
map from $V$ to $\mathbb{R}^d$.
We consider the framework to be a straight line {\it realization} of $G$ in
$\mathbb{R}^d$.
Two frameworks
$(G,p)$ and $(G,q)$ are {\it equivalent} if
$\|p(u)-p(v)\|=\|q(u)-q(v)\|$ holds
for all pairs $u,v$ with $uv\in E$,
where $\|.\|$ denotes the Euclidean norm in $\mathbb{R}^d$.
Frameworks $(G,p)$, $(G,q)$ are {\it congruent} if
$\|p(u)-p(v)\|=\|q(u)-q(v)\|$ holds
for all pairs $u,v$
with $u,v\in V$.
This is the same as saying that $(G,q)$ can be
obtained from $(G,p)$ by
an isometry
of $\mathbb{R}^d$.

Let $(G,p)$ be a $d$-dimensional framework for some $d\geq 1$.
We say that $(G,p)$
is {\it rigid} in $\mathbb{R}^d$ if 
there is a neighborhood $U_{p}$ in the space of $d$-dimensional configurations 
such that if a $d$-dimensional framework
$(G,q)$ is equivalent to $(G,p)$ and $q \in U_{p}$, then 
$q$ is congruent to $p$. The framework $(G,p)$ is called {\it globally rigid}
in $\mathbb{R}^d$ if every $d$-dimensional framework $(G,q)$ which is equivalent
to $(G,p)$ is congruent to $(G,p)$.
We obtain an even stronger property by extending this condition to
equivalent realizations in any dimension: we say that
$(G,p)$ is {\it universally rigid}
if it is a unique realization of $G$, up to congruence, with the given edge
lengths, in all dimensions $\mathbb{R}^{d'}$, $d'\geq 1$. 

Deciding whether a given framework is rigid in $\R^d$, for $d\geq 2$
(resp. globally rigid in $\R^d$, for $d\geq 1$) is NP-hard 
\cite{Abb, Saxe}. The complexity of the corresponding decision problem for
universal rigidity seems to be open, even for $d=1$.
These problems become more tractable, however, if we assume that there are no
algebraic dependencies between the coordinates of the points of
the framework.
A framework $(G,p)$ is said to be {\it generic} if
the set containing the coordinates of all its points is
algebraically independent over the rationals.
It is
well-known 
that the rigidity (resp. global rigidity) of frameworks in $\mathbb{R}^d$
is a generic property for all $d\geq 1$, that is, the (global)
rigidity of $(G,p)$ depends only on the graph $G$ and not the particular
realization $p$,
if $(G,p)$ is generic \cite{AR,connelly2,GHT}.
This property does not hold for universal rigidity, even if
$d=1$, which follows by considering different generic
realizations of a four-cycle on the line. See Figure~\ref{fig:fourcycle}.

\begin{figure}[ht]
 \centering
 \begin{subfigure}[b]{0.35\textwidth}
 \centering
 \includegraphics{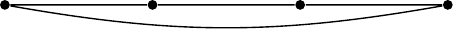}
 \caption{An universally rigid realization of a four-cycle in $\mathbb{R}^1$.}
 \label{fig:fourcycle:a}
 \end{subfigure}
 \hspace{0.1\textwidth}
 \begin{subfigure}[b]{0.35\textwidth}
 \centering
 \includegraphics{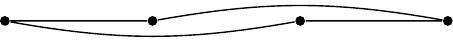}
 \caption{A not universally rigid realization of a four-cycle in $\mathbb{R}^1$.}
 \label{fig:fourcycle:b}
 \end{subfigure}\\[1em]
 \begin{subfigure}[b]{0.85\textwidth}
 \centering
 \includegraphics{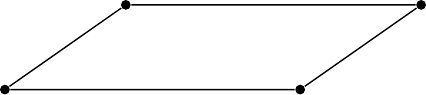}
 \caption{A realization of a four-cycle in $\mathbb{R}^2$ equivalent, but not
 congruent, to the realization in Figure~\ref{fig:fourcycle:b}.}
 \label{fig:fourcycle:c}
 \end{subfigure}
 \caption{Realizations of a four-cycle.}
 \label{fig:fourcycle}
\end{figure}

A graph $G$ is called
{\it generically rigid} (resp. {\it generically globally rigid}, {\it generically
universally rigid})
 in ${\mathbb R}^d$ if every $d$-dimensional generic framework $(G,p)$ is
rigid (resp. globally rigid, universally rigid).
Generically rigid and globally rigid graphs are well-characterized for $d\leq 2$.
It remains an open problem to extend these results to $d\geq 3$.
The characterization of generically universally rigid graphs is an open problem for all $d\geq 1$.
We refer the reader to \cite{JW,SW} for more details on the theory
of rigid graphs and frameworks.


In this paper we focus on universally rigid frameworks and
generically universally rigid graphs in $\R^1$.
We give counterexamples to a
conjectured characterization of generically universally rigid graphs from \cite{conwshop}.
We introduce a new operation that preserves the universal rigidity of
generic frameworks and use it to construct infinite families of counterexamples.
We also show that the so-called degree-2 extension operation
preserves the property of being not universally rigid, for $d=1$.
This operation is applied in the proof of a new lower bound on
the size of generically universally rigid graphs. This bound gives
a partial answer to a question from \cite{JN}.

\section{Examples}

Let $G_1=(V_1,E_1)$, $G_2=(V_2,E_2)$ be two graphs
for which $V_1\cap V_2$, $V_1-V_2$, and $V_2-V_1$ are all nonempty.
Then $G=(V_1\cup V_2, E_1 \cup (E_2-E(G_2[V_1\cap V_2])))$ is called
the {\it edge reduced attachment} of $G_1$ and $G_2$ {\it along $G_2$}.
That is, $G$ is obtained by
removing the edges of $G_2$ which are spanned by the intersection of
their vertex sets and then taking the union of the two graphs.
Ratmanski \cite{ratmanski} proved that the edge reduced attachment operation preserves
generic universal rigidity in $\R^d$, provided $|V_1\cap V_2|\geq d+1$.
It was conjectured, by participants of a workshop in 2011, that
for $d=1$ every generically universally rigid graph can be obtained from 
a set of triangles by this operation, and edge addition.

\begin{conj} \cite{conwshop}
\label{inductive} A graph $G$ on at least three vertices
is generically universally rigid in $\R^1$ if and only if $G$ can be obtained from
a set of triangles by edge reduced attachment and edge addition operations.
\end{conj}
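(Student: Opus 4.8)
The plan is to prove the two implications separately, using edge addition to dispose of non-minimal graphs and reserving the genuine structural work for the forward direction.

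For the direction that graphs built from triangles are generically universally rigid (the ``if''), I would induct on the number of operations in the construction. The base case is that a triangle $K_3$ is generically universally rigid in $\R^1$: for a generic realization $p_1<p_2<p_3$ the edge lengths satisfy $\ell_{13}=\ell_{12}+\ell_{23}$, i.e.\ equality in the triangle inequality, so any equivalent framework in any $\R^{d'}$ must place the three points collinearly in the same relative order and is therefore congruent to $(K_3,p)$. For the inductive step, Ratmanski's theorem (quoted above) gives that edge reduced attachment preserves generic universal rigidity in $\R^1$ whenever the glued frameworks share at least $d+1=2$ vertices, which the definition of the operation guarantees; and edge addition trivially preserves it, since an extra edge only adds a length constraint and cannot create a second realization. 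Chaining these facts along the construction sequence shows that $G$ is generically universally rigid.

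The substantive direction is the converse (``only if''): every generically universally rigid $G$ on at least three vertices arises from triangles by these two operations. Here I would induct on $|V(G)|+|E(G)|$, the base case $n=3$ being settled by the observation that $K_3$ is the unique generically universally rigid graph on three vertices (a path $P_3$ folds at its midpoint, and the two sparser graphs are not even rigid on the line). For the inductive step I would first reduce to the minimal case: if some edge $e$ can be deleted so that $G-e$ stays generically universally rigid, then $G=(G-e)+e$ is an edge addition applied to the strictly smaller $G-e$, handled by induction, so I may assume $G$ is \emph{minimally} generically universally rigid. The heart of the argument is then a structural lemma: every minimal generically universally rigid graph in $\R^1$ admits a nontrivial edge reduced attachment decomposition into strictly smaller parts $G_1$ and $G_2$, each itself generically universally rigid. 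Concretely one wants to locate either a degree-two vertex whose two neighbours are adjacent (a triangle to detach) or a $2$-vertex cut realizing the attachment, so that the inductive hypothesis applies to the resulting pieces.

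To prove this structural lemma I would work with stress matrices. For a generic framework in $\R^1$, universal rigidity is controlled by the existence of a positive semidefinite (PSD) equilibrium stress matrix $\Omega$ of maximal rank $n-2$, its kernel spanned by the all-ones vector and the coordinate vector $p$; minimality forces $\Omega$ to be nonzero on every edge. The plan is to read off combinatorial consequences of possessing such a PSD, corank-two stress supported exactly on $E(G)$: by analyzing low-degree vertices and small cuts one would hope to expose either a detachable triangle or a $2$-separation across which $\Omega$ block-decomposes compatibly with positive semidefiniteness, in either case producing the desired edge reduced attachment, after which induction finishes the proof. The main obstacle, and the crux of the whole characterization, is precisely this lemma: there is no a priori reason that a PSD corank-two stress must live on a graph with a detachable triangle or a stress-compatible $2$-cut, and controlling the interaction between the sign and rank constraints on $\Omega$ and the global connectivity of $G$ is exactly the delicate point on which the equivalence stands or falls.
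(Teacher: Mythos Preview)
Your proposal attempts to \emph{prove} the conjecture, but the statement is in fact a conjecture that the paper \emph{disproves}. The ``if'' direction you sketch is correct and is exactly Ratmanski's result together with the trivial observation about edge addition. The ``only if'' direction, however, is false, so the structural lemma you are aiming for cannot hold.

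Concretely, the paper exhibits the graph $B_4$, obtained by joining two disjoint copies of $K_4$ with a perfect matching of four edges, and proves (Theorem~\ref{join}) that it is generically universally rigid in $\R^1$. This graph has minimum degree $4$ and is $3$-connected, so it has neither a degree-two vertex nor a $2$-vertex cut; your proposed decomposition step therefore has nothing to work with. Moreover, $B_4$ has an independent edge cut (the four matching edges), and Lemma~\ref{3prop}(iii) shows that no graph built from triangles by edge reduced attachments and edge additions can have such a cut. Hence $B_4$ is a genuine counterexample to the conjecture. A second, more dramatic counterexample is the augmented Gr\"otzsch graph: it is triangle-free yet generically universally rigid (Theorem~\ref{mainG}), so by Lemma~\ref{3prop}(i) it certainly cannot be assembled from triangles. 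Your closing remark that the stress-matrix analysis is ``the delicate point on which the equivalence stands or falls'' is exactly right---it falls.
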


We next present two counterexamples to Conjecture~\ref{inductive}.
The first one, on eight vertices, requires a more sophisticated argument,
including the analysis of a new operation that can be used to build
generically universally rigid graphs. 
This operation can also be used to construct infinite families of counterexamples.
The second one, on sixteen vertices,
is fairly easy to verify.
We need the following simple lemma on attachments.

The {\it $K_4$-completion} operation adds a new edge $uv$ to a graph
for a vertex pair $u,v$ with two adjacent common neighbours.
We say that $F\subseteq E$ is
an {\it independent edge cut} in a graph $G=(V,E)$ 
if the edges in $F$ are pairwise disjoint, and there is a nonempty proper
subset $X\subset V$ for which the set of edges connecting $X$ and $V-X$ is $G$
is $F$. 

\begin{lem}
\label{3prop}
Suppose that $G=(V,E)$ is a connected graph that can be obtained by edge reduced attachments and edge additions from a set of
triangles. 
Then\\
(i) $G$ contains a triangle,\\
(ii) the complete graph on $V$ can be obtained from $G$ by $K_4$-completion operations,\\
(iii) $G$ has no independent edge cuts.
\end{lem}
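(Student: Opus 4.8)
The natural approach is induction on the number of operations used to build $G$ from a set of triangles, proving all three statements simultaneously so that the induction hypothesis is strong enough at each step. The base case is a single triangle (the graph $G$ must be connected, so a disjoint set of triangles is excluded unless it is one triangle), where (i) is immediate, (ii) is vacuous since $K_3$ is already complete on its vertex set, and (iii) holds because a triangle is $2$-edge-connected and has no two disjoint edges forming a cut.

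For the inductive step I would distinguish the two operations. Edge addition is the easy case: adding an edge to $G$ cannot destroy a triangle, cannot create an independent edge cut (adding edges only increases edge-connectivity, and an independent edge cut of $G+e$ restricted away from $e$ would have to be an independent edge cut of $G$, contradicting the hypothesis — one checks the cut $X$ either still works or $e$ joins its two sides, in which case removing $e$ leaves a smaller edge cut which can be refined), and for (ii) one obtains $K_V$ from $G+e$ via the same $K_4$-completions that work for $G$ (an edge present in $G$ is still present in $G+e$). The substantive case is edge reduced attachment: here $G$ is built from $G_1$ and $G_2$ along $G_2$, with $W := V_1 \cap V_2$ nonempty and $V_1 - V_2$, $V_2 - V_1$ nonempty, and by induction each $G_i$ satisfies (i)–(iii) (after checking that connectivity of $G$ forces us into the case where the relevant pieces are connected, or reducing to connected components). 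For (i): since $G$ contains all edges of $G_1$, and $G_1$ has a triangle by induction, $G$ has a triangle — unless the attachment deletes it, but only edges of $G_2$ inside $W$ are deleted, and edges of $G_1$ are untouched, so a triangle of $G_1$ survives. For (iii): a short case analysis on how an independent edge cut of $G$ interacts with the decomposition; an independent edge cut induces an independent edge cut in $G_1$ or in $G_2$ (or a near-cut that can be completed using the deleted $G_2[W]$ edges), contradicting the induction hypothesis for $G_1$ or $G_2$. The key point is that the deleted edges all lie inside $W$, so they are "absorbed" on one side of any cut.

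For part (ii), which I expect to be the main obstacle, the plan is: first use (ii) for $G_1$ to complete $G_1$ to $K_{V_1}$ via $K_4$-completions performed inside $G$ — this is legitimate provided each such completion still has its two adjacent common neighbours present in $G$, which holds because we only ever add edges. Once $K_{V_1}$ sits inside the current graph, every pair of vertices in $W$ is joined, so every edge of $G_2$ inside $W$ that was deleted can be restored: in fact it is already present. Now the current graph contains $G_2$ entirely (its surviving edges plus the restored $W$-edges) together with $K_{V_1}$; applying (ii) for $G_2$ completes $G_2$ to $K_{V_2}$ via $K_4$-completions, again valid since we only add edges. At this stage the graph contains $K_{V_1}$ and $K_{V_2}$ with $V_1 \cap V_2 = W \neq \emptyset$, so picking any $w \in W$, any two vertices $a \in V_1 - V_2$ and $b \in V_2 - V_1$ have the common neighbours $w$ and any second vertex of $W$ if $|W|\geq 2$, or — if $|W| = 1$ — we first join $a$ to a neighbour of $b$ using $w$ as the pivot through a sequence of $K_4$-completions along the clique structure; a clean way to phrase this is that $K_{V_1} \cup K_{V_2}$ can always be completed to $K_{V_1 \cup V_2}$ by $K_4$-completions, which is a small self-contained claim proved by induction on $|V_1 - V_2| + |V_2 - V_1|$. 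The delicate bookkeeping — ensuring at every stage that the two common neighbours required by a $K_4$-completion are genuinely present and adjacent in the current intermediate graph — is where the argument needs care, and handling $|W| = 1$ is the part most likely to require an auxiliary lemma.
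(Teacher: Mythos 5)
Your overall strategy --- induction on the number of operations, with (i) coming from the fact that $G_1$ survives intact inside $G$, and (ii) proved by first completing $V_1$ by $K_4$-completions, observing that this restores the deleted edges of $G_2$ inside $W=V_1\cap V_2$ so that $G_2$ becomes a subgraph, then completing $V_2$, and finally filling in the cross edges --- is the paper's argument. One difference: for (iii) you run a separate inductive case analysis on how a cut interacts with the decomposition, whereas the paper simply notes that (ii) implies (iii). That implication is a one-liner: a $K_4$-completion can never add an edge crossing an independent edge cut, because whichever sides the two adjacent common neighbours $a,b$ of the new pair $u,v$ lie on, two of the crossing edges among $ua,ub,va,vb,ab$ share a vertex, contradicting independence; so an independent edge cut would persist through all completions and block completeness. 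Your route for (iii) can be made to work, but it is strictly more bookkeeping.

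The genuine problem is in (ii). The auxiliary claim you propose to lean on --- that $K_{V_1}\cup K_{V_2}$ with $V_1\cap V_2\neq\emptyset$ can always be completed to $K_{V_1\cup V_2}$ by $K_4$-completions --- is false when $|V_1\cap V_2|=1$, and no pivoting trick will rescue it. If $w$ is the unique shared vertex, set $A=V_1-\{w\}$ and $B=V_2-\{w\}$: as long as no $A$--$B$ edge is present, any common neighbour of $u\in A$ and $v\in B$ other than $w$ would already supply an $A$--$B$ edge, so $u$ and $v$ have only the single common neighbour $w$ and no $K_4$-completion applies to them; hence the first $A$--$B$ edge can never be created. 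In particular, two triangles glued at one vertex (obtainable by a single edge reduced attachment if $|V_1\cap V_2|=1$ is permitted by the definition) can never be completed, so under that reading statement (ii) is simply false. The resolution is the hypothesis $|V_1\cap V_2|\ge 2$, consistent with Ratmanski's condition $|V_1\cap V_2|\ge d+1$ for $d=1$, which the paper's proof invokes with the phrase ``$G_1$ and $G_2$ share at least two vertices'': once both $K_{V_1}$ and $K_{V_2}$ are present and $|W|\ge 2$, any $a\in V_1-V_2$ and $b\in V_2-V_1$ have two adjacent common neighbours in $W$ and the remaining edges follow at once. You correctly sensed that $|W|=1$ is the weak point, but the fix is to exclude it by hypothesis, not to prove it.
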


\begin{proof}
(i) follows, by induction, from the fact that if $G$ is the edge reduced attachment of $G_1$ and $G_2$ along $G_2$, then
$G$ contains $G_1$ as a subgraph. So if $G_1$ contains a triangle, so does $G$.
To prove (ii) we use induction on the number $t$ of operations used to build up $G$.
For $t=0$ we have $G=K_3$, for which the statement is obvious.
Suppose that $t\geq 1$ and consider the last operation applied, that resulted in graph $G$. 
The case when the last operation is edge addition is easy to deal with, so suppose that $G$
was obtained from $G_1$ and $G_2$ by an edge reduced attachment along $G_2$.
Then $G_1$ is a subgraph of $G$. By induction the complete graph on $V(G_1)$ can be obtained from $G_1$ by
$K_4$-completions. By performing these operations on $G$, we make $V(G_1)\cap V(G_2)$ complete, which implies
that $G_2$ is a subgraph of the resulting graph. So, by induction, we can apply $K_4$-completions to make
the subgraph of $G$ on $V(G_2)$ complete as well.
Since $G_1$ and $G_2$ share at least two vertices, all the missing edges of $G$ can then be added
by further $K_4$-completions. 
Finally, it is clear that (ii) implies (iii).
\end{proof}

\begin{figure}[ht]
 \centering
 \begin{subfigure}[b]{0.3\textwidth}
 \centering
 \includegraphics{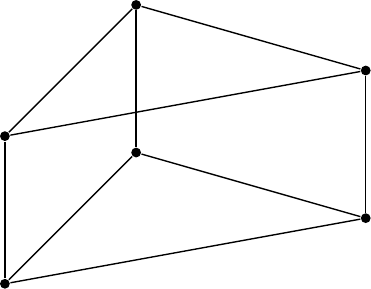}
 \caption{The graph $B_3$.}
 \label{fig:burger:3}
 \end{subfigure}
 \hspace{0.03\textwidth}
 \begin{subfigure}[b]{0.3\textwidth}
 \centering
 \includegraphics{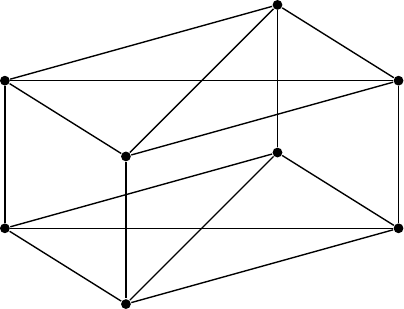}
 \caption{The graph $B_4$.}
 \label{fig:burger:4}
 \end{subfigure}
 \hspace{0.03\textwidth}
 \begin{subfigure}[b]{0.3\textwidth}
 \centering
 \includegraphics{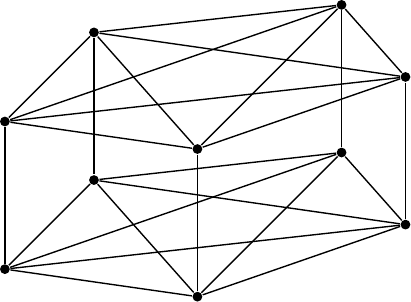}
 \caption{The graph $B_5$.}
 \label{fig:burger:5}
 \end{subfigure}
 \caption{Graphs $B_3$, $B_4$, and $B_5$.}
 \label{fig:burger}
\end{figure}

Let $B_n$ be the graph obtained from two disjoint complete graphs on $n$ vertices by adding
$n$ disjoint edges. See Figure~\ref{fig:burger}.
As we shall see in the next section (c.f. Theorem~\ref{join}), $B_n$ is generically universally rigid in $\R^1$ for $n\geq 4$.
Since $B_n$ has an independent edge cut, it cannot be obtained by edge reduced attachments and edge additions from a set of
triangles by Lemma~\ref{3prop}(iii). 
Hence $B_4$ (and each graph $B_i$, for $i\geq 4$) is a counterexample to Conjecture~\ref{inductive}.

The other example was motivated by a question in \cite{JN}, asking whether there is a
triangle-free generically universally rigid graph in $\R^1$, and in particular, whether
the triangle-free 
Gr\"otzsch graph is generically universally rigid in $\R^1$. See Figure~\ref{fig:Grotzsch}.
We leave the latter question open. Instead we consider the following supergraph of the
Gr\"otzsch graph and use it to give an affirmative answer to the former question.

\begin{defn}
The \textit{augmented Gr\"otzsch graph} 
is obtained from the Gr\"otzsch graph
on vertex set $\{w,u_0,u_1,\dots u_4,v_0,v_1\dots v_4\}$ (labeled as in Figure~\ref{fig:Grotzsch}) by adding the vertices
$v_i'$, $0\leq i\leq 4$ and edges $v_i'u_{i+1},v_i'v_{i+1},v_i'v_{i+1}'$, for $0\leq i\leq 4$,
counting indices mod $5$. See Figure~\ref{fig:augmentedGrotzsch}.
\end{defn}


\begin{figure}[ht]
 \centering
 \begin{subfigure}[c]{.4\textwidth}
 \centering
 \includegraphics{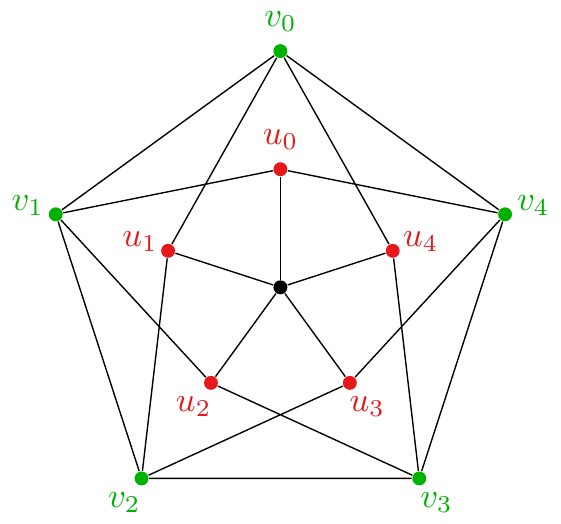}
 \caption{The Grötzsch graph.}
 \label{fig:Grotzsch}
 \end{subfigure}
 \hspace{.05\textwidth}
 \begin{subfigure}[c]{.5\textwidth}
 \centering
 \includegraphics{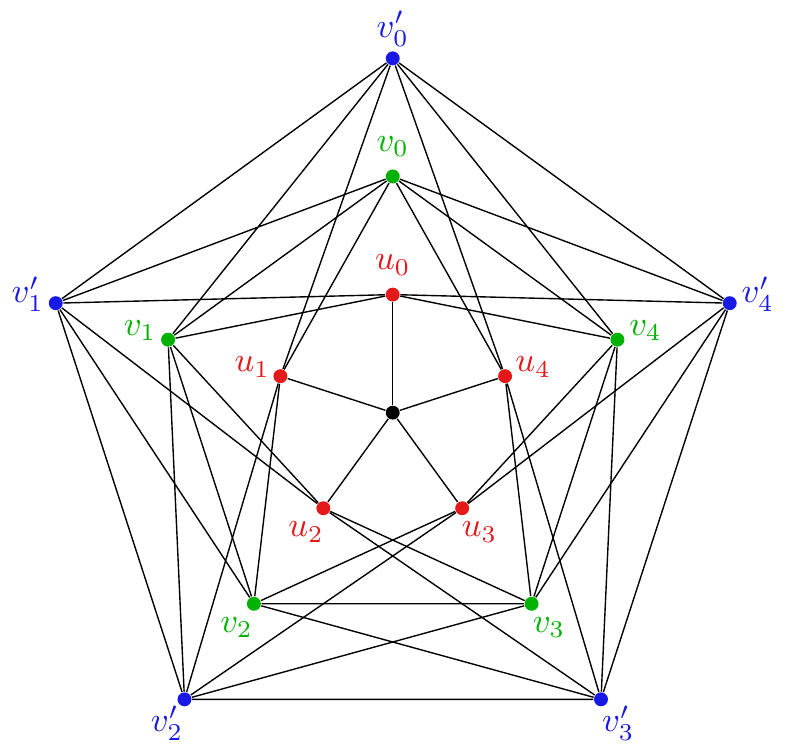}
 \caption{The augmented Grötzsch graph.}
 \label{fig:augmentedGrotzsch}
 \end{subfigure}
 \caption{The Grötzsch graph and the augmented Grötzsch graph. The central vertex is $w$.}
\end{figure}

Thus
the augmented Gr\"otzsch graph has sixteen vertices, and it contains the Gr\"otzsch graph as a subgraph.
We shall prove that it is generically universally rigid in $\R^1$.
We need the following simple lemma. 
The {\it degree-2 extension} operation adds a new vertex $w$ to a graph $G$ and two
new edges $wx, wy$, for two distinct vertices of $G$.
This operation can also be performed on a framework $(G,p)$, in which case
it includes the extension of $p$ by $p(w)$. 

\begin{lem}
\label{2ext}
Let $(G,p)$ be a universally rigid realization of $G$ in $\R^1$. 
Suppose that $(G',p)$ can be obtained from $(G,p)$ by a degree-2 extension that adds
the edges $wx, wy$. If $p(x)\not= p(y)$,
then $(G',p)$ is universally rigid in $\R^1$.
\end{lem}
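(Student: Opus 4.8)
The plan is to argue by contradiction. Suppose $(G',p)$ is not universally rigid in $\R^1$; then there is some dimension $d'\geq 1$ and a framework $(G',q)$ in $\R^{d'}$ that is equivalent to $(G',p)$ but not congruent to it. The key observation is that the restriction $q|_{V(G)}$ gives a framework $(G,q|_{V(G)})$ that is equivalent to $(G,p)$, simply because every edge of $G$ is also an edge of $G'$, and equivalence of $(G',q)$ with $(G',p)$ preserves all those edge lengths. Since $(G,p)$ is universally rigid in $\R^1$, the framework $(G,q|_{V(G)})$ must be congruent to $(G,p)$; in particular it lives (up to isometry) on a line, so after composing $q$ with a suitable isometry of $\R^{d'}$ we may assume $q(v)=p(v)$ for every $v\in V(G)$.

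Now only the position of the new vertex $w$ remains to be pinned down. The two edges $wx,wy$ of $G'$ force $\|q(w)-p(x)\| = \|p(w)-p(x)\|$ and $\|q(w)-p(y)\| = \|p(w)-p(y)\|$. So $q(w)$ lies on the intersection of two spheres in $\R^{d'}$ centered at the collinear points $p(x)$ and $p(y)$, with radii equal to the distances from the line-point $p(w)$. Because $p(x)\neq p(y)$ and $p(w)$ is on the line through them, $p(w)$ is already one such intersection point; the set of solutions is the sphere of dimension $d'-2$ obtained as the intersection of these two distinct concentric-family spheres, and $p(w)$ lies on the line $\operatorname{aff}\{p(x),p(y)\}$, which meets that intersection sphere in at most the two points symmetric about the line, namely $p(w)$ and its reflection. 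Here I use the fact (the standard two-sphere-intersection computation, which I will spell out) that the affine hull of $\{p(x),p(y)\}$ meets the solution set exactly in the pair $\{p(w), p(w)'\}$ where $p(w)'$ is the mirror image of $p(w)$ across the hyperplane bisecting $[p(x),p(y)]$ --- and when $p(w)\ne p(x),p(y)$ these may be two distinct points, but both are reflections of each other in a hyperplane, hence related by an isometry of $\R^{d'}$ fixing $V(G)$ pointwise.

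The remaining point is that $q(w)$ need not lie on the line through $p(x)$ and $p(y)$; a priori it is any point of the $(d'-2)$-sphere of solutions. But any two points of that sphere differ by a rotation of $\R^{d'}$ that fixes the axis through $p(x)$ and $p(y)$, and hence fixes all of $V(G)$ (which is collinear on that axis after our normalization). Composing with this rotation we may take $q(w)=p(w)$, so $q$ is congruent to $p$ on all of $V(G')$, contradicting the choice of $(G',q)$. Therefore $(G',p)$ is universally rigid in $\R^1$. The only step requiring care --- and the one I would write out in full --- is the elementary but slightly fiddly verification that the solution set for $q(w)$ is exactly one orbit under the subgroup of isometries of $\R^{d'}$ fixing the collinear set $V(G)$, which boils down to the two-sphere-intersection lemma together with the hypothesis $p(x)\neq p(y)$ ensuring the two spheres are genuinely distinct (not identical), so the intersection is a proper sub-sphere and the argument does not degenerate.
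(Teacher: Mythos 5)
Your overall strategy is the right one, and it is essentially the argument the paper has in mind (the paper states Lemma~\ref{2ext} without proof, calling it simple): restrict an equivalent realization $(G',q)$ in $\R^{d'}$ to $V(G)$, invoke the universal rigidity of $(G,p)$ to normalize $q|_{V(G)}=p|_{V(G)}$ on a line in $\R^{d'}$, and then show that the two length constraints from $wx$ and $wy$ determine $q(w)$ up to an isometry fixing that line pointwise. Two remarks. First, the sentence about $p(w)$ and its ``reflection across the hyperplane bisecting $[p(x),p(y)]$'' is wrong and should be deleted: that reflection swaps $p(x)$ and $p(y)$, so it neither preserves the two distance constraints (unless $|p(w)-p(x)|=|p(w)-p(y)|$) nor fixes $V(G)$ pointwise; fortunately nothing in your final argument relies on it. Second, the ``fiddly verification'' you defer is in fact a two-line computation that closes the proof completely: taking the axis through $p(x)\neq p(y)$ as the first coordinate and subtracting the two equations $\|z-p(x)\|^2=(p(w)-p(x))^2$ and $\|z-p(y)\|^2=(p(w)-p(y))^2$ gives $z_1=p(w)$, and substituting back gives $z_\perp=0$; since $p(w)$ lies on the axis, the intersection ``sphere'' has radius $0$ and the solution set is the single point $p(w)$, so the rotation step is not even needed. (Your rotation-about-the-axis argument is the correct general mechanism and would be what saves you if $p(w)$ were off the axis; here it degenerates.) Either way $q(w)=p(w)$ after the normalization, so all pairwise distances of $q$ on $V(G')$ agree with those of $p$, and the lemma follows.
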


Let $C$ be a cycle on vertex set $\{x_1,x_2,\ldots, x_k\}$ 
with edge set $E(C)=\{x_1x_2,\ldots, x_{k-1}x_k, \allowbreak x_kx_1\}$. 
Let $(C,p)$ be a $1$-dimensional realization of $C$.
If $p(x_1)<p(x_2)<\cdots<p(x_k)$ then $(C,p)$ (or sometimes $C$ itself) is called a \emph{stretched cycle}. 
It is easy to see that stretched cycles are universally rigid in $\R^1$.

It is known that the Gr\"otzsch graph is not a ``cover graph.''
By using our terminology, this fact can be restated as follows.
See \cite{FFLW} for a short combinatorial proof.

\begin{thm} \cite{FFLW}
\label{notcover}
Every injective $1$-dimensional realization of the Gr\"otzsch graph has a stretched cycle.
\end{thm}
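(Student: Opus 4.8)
The plan is to establish the translation promised by the phrase ``by using our terminology'' and then appeal to the known fact that the Gr\"otzsch graph is not a cover graph. Recall that a graph $G$ is a \emph{cover graph} if there is a partial order on $V(G)$ whose covering pairs are precisely the edges of $G$. I would first prove the following equivalence: \emph{a graph $G$ has an injective $1$-dimensional realization with no stretched cycle if and only if $G$ is a cover graph.} Given such a realization $(G,p)$, orient every edge from its lower to its higher endpoint; the resulting orientation is acyclic, and I claim the covering pairs of its transitive closure $<$ are exactly $E(G)$. If an edge $ab$ with $p(a)<p(b)$ were not a covering pair, some vertex $c$ would satisfy $a<c<b$, and concatenating an $a$-to-$c$ and a $c$-to-$b$ directed path (which is automatically repetition-free, since an acyclic orientation has no directed cycle) produces a directed path $a=y_0\to y_1\to\cdots\to y_m=b$ with $m\ge 2$; then $y_0y_1\cdots y_m y_0$ is a stretched cycle, a contradiction. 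Conversely a covering pair $x\lessdot y$ is joined by a directed path whose second vertex, if the path had length $\ge 2$, would lie strictly between $x$ and $y$; so $xy\in E$. Hence $<$ witnesses that $G$ is a cover graph. For the reverse direction, if $G$ is the Hasse diagram of a poset $P$, realize a linear extension of $P$ by an injective $p$; a putative stretched cycle $x_1\cdots x_k$ (with $p$ increasing along the indices) would force $x_1\lessdot x_2\lessdot\cdots\lessdot x_k$ and simultaneously $x_1\lessdot x_k$ in $P$, contradicting $x_1<x_2<x_k$.

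With the equivalence in hand, the theorem follows from its contrapositive form: since the Gr\"otzsch graph is not a cover graph \cite{FFLW}, it has no injective $1$-dimensional realization avoiding a stretched cycle, i.e.\ every injective realization contains one. Alternatively, one can argue directly inside the Gr\"otzsch graph: assuming an injective realization with no stretched cycle, its edges coincide with the covering pairs of the up-reachability order; a pigeonhole step places three of the vertices $v_0,\dots,v_4$ on the same side of $w$, hence two of them with consecutive indices, and tracing the forced orientations of the edges joining these two vertices to the outer $5$-cycle (and of the $5$-cycle itself) yields either a stretched cycle or an edge that fails to be a covering pair of that order.

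I expect the only real obstacle to be the Gr\"otzsch-specific statement---that the graph admits no acyclic orientation in which every edge is the unique directed path between its ends---whose verification needs a moderately careful case analysis organized around the position of $w$ relative to its neighbours. Everything else is the formal translation above, which is short. Since \cite{FFLW} already provides a short combinatorial proof of the Gr\"otzsch fact, I would cite it rather than reproduce the case analysis.
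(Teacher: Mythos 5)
Your proposal is correct and matches the paper's approach: the paper likewise treats the theorem as a restatement of the known fact that the Gr\"otzsch graph is not a cover graph and simply cites \cite{FFLW} for that fact. The only difference is that you spell out the dictionary between injective realizations with no stretched cycle and acyclic orientations realizing Hasse diagrams, which the paper leaves implicit; your translation is accurate.
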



We are ready to deduce that the augmented Gr\"otzsch graph is generically universally rigid.
We can actually show a somewhat stronger property.

\begin{thm}
\label{mainG}
Every injective $1$-dimensional realization of the augmented Gr\"otzsch graph is universally rigid.
\end{thm}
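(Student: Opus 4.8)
The plan is to combine the ``cover graph'' obstruction (Theorem~\ref{notcover}) with the degree-2 extension lemma (Lemma~\ref{2ext}) and the fact that stretched cycles are universally rigid in $\R^1$. Let $(H,p)$ be an injective $1$-dimensional realization of the augmented Gr\"otzsch graph $H$. The key observation is that $H$ is obtained from a subgraph $H_0$ by a sequence of degree-2 extensions: starting from a suitable ``core'' subgraph, we add the vertices $v_i'$, $0\le i\le 4$, one at a time, each $v_i'$ being joined to exactly the two vertices among $\{u_{i+1}, v_{i+1}, v_{i+1}'\}$ that are already present. If we add $v_0', v_1', v_2', v_3', v_4'$ in this order, then at the moment $v_i'$ is introduced only $v_{i+1}'$ might be missing for $i=4$ (since $v_0'$ is already present), so one must be slightly careful about which two of the three prescribed edges at $v_i'$ are ``new''; reordering the additions (or peeling them off from the last one added) resolves this, so that each step is a genuine degree-2 extension adding two edges between already-present vertices. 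By Lemma~\ref{2ext}, if at each step the two anchor vertices have distinct images under $p$ --- which holds because $p$ is injective --- then universal rigidity is preserved. Hence it suffices to show that the core subgraph $H_0$, realized by the restriction of $p$, is universally rigid in $\R^1$.

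For the core, I would take $H_0$ to be a subgraph that contains the Gr\"otzsch graph (or at least enough of it to invoke Theorem~\ref{notcover}). Since $p$ is injective, its restriction to the Gr\"otzsch subgraph is an injective $1$-dimensional realization, so by Theorem~\ref{notcover} this realization contains a stretched cycle $C$. Stretched cycles are universally rigid in $\R^1$, so $(C, p|_C)$ is universally rigid. Now I would try to recover all of $H_0$ from $C$ again by degree-2 extensions (using injectivity of $p$ to guarantee distinct anchors at each step), invoking Lemma~\ref{2ext} repeatedly. The point is that once a long stretched cycle through many vertices of $H_0$ is pinned down, the remaining vertices of $H_0$ each have at least two neighbours already placed, so they can be attached one by one.

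The main obstacle I anticipate is the bookkeeping: verifying that \emph{every} vertex outside the stretched cycle $C$ (whose existence Theorem~\ref{notcover} guarantees, but whose identity we do not control) can be ordered so that, when it is added back, it has two already-placed neighbours with distinct $p$-values. The stretched cycle produced by Theorem~\ref{notcover} could be any of the $5$-cycles (or longer cycles) in the Gr\"otzsch graph, so one needs an argument uniform over all possibilities --- most cleanly, one shows that \emph{from any stretched cycle in the Gr\"otzsch subgraph}, the whole augmented Gr\"otzsch graph is reachable by degree-2 extensions. Here one uses that the Gr\"otzsch graph has the property that every vertex not on a given short cycle has two neighbours reachable along the extension order, together with the extra vertices $v_i'$ having their three prescribed neighbours; the high symmetry of the Gr\"otzsch graph (the dihedral action on the indices mod $5$) should let us assume the stretched cycle is in a canonical position and then check a single case. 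Once that reachability is established, injectivity of $p$ supplies the distinctness hypothesis of Lemma~\ref{2ext} at every step, and the proof closes. Finally, since a generic realization is in particular injective, generic universal rigidity of the augmented Gr\"otzsch graph follows as a corollary, giving the triangle-free answer to the question from \cite{JN} (the augmented Gr\"otzsch graph, like the Gr\"otzsch graph, is triangle-free: the new edges $v_i'u_{i+1}, v_i'v_{i+1}, v_i'v_{i+1}'$ create no triangle, as one checks directly from the adjacencies).
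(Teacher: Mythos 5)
Your high-level strategy is the same as the paper's: invoke Theorem~\ref{notcover} to find a stretched cycle $C$ in the Gr\"otzsch subgraph, note that $(C,p|_{V(C)})$ is universally rigid, and then grow a universally rigid subframework by repeated applications of Lemma~\ref{2ext}, with injectivity of $p$ supplying the distinct-anchors hypothesis at every step. However, the step you yourself flag as ``the main obstacle'' --- showing that from \emph{an arbitrary} stretched cycle the whole graph is reachable by degree-2 extensions --- is exactly the mathematical content of the theorem, and you do not supply it. Your hope that ``the high symmetry of the Gr\"otzsch graph\dots should let us assume the stretched cycle is in a canonical position and then check a single case'' does not work: the automorphism group is only the dihedral group of order $10$, while the stretched cycle guaranteed by Theorem~\ref{notcover} can have any length from $4$ up to $11$ and pass through the vertices in many inequivalent ways, so there is no single canonical case. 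The paper closes this gap with a concrete combinatorial observation: since $C$ has length at least $4$ (the graph is triangle-free), $C-w$ contains a path on three vertices, which forces $C$ to contain either a pair $u_i,v_i$ or a pair of vertices whose indices differ by two; any such pair is a common neighbour pair of \emph{both} $v_j$ and $v_j'$ for a suitable $j$, so Lemma~\ref{2ext} adds $v_j$ and $v_j'$ simultaneously, and then the pair $\{v_j,v_j'\}$ (or the pairs it generates) propagates the argument around the $5$-fold structure to pick up all $v$- and $v'$-vertices, after which the $u$-vertices and $w$ follow.

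A second, structural problem: your opening reduction --- peel off the vertices $v_i'$ as degree-2 extensions so as to reduce to a ``core'' $H_0$ containing the Gr\"otzsch graph --- is backwards. Lemma~\ref{2ext} builds universal rigidity \emph{up} from a universally rigid base; running it in reverse would require you to first establish that every injective realization of the Gr\"otzsch graph itself (or of your core $H_0$) is universally rigid, which is precisely the question the paper leaves open, and which the augmentation was introduced to avoid. The vertices $v_i'$ are not dispensable decoration: they are the vertices that make the common-neighbour-pair argument fire (each pair $\{v_j, v_j'\}$ shares two neighbours that the stretched cycle is guaranteed to hit), so they must be \emph{used} in the reconstruction from $C$, not stripped away beforehand. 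Without the explicit reachability argument, and with the core reduction pointing the wrong way, the proposal does not constitute a proof.
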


\begin{proof}
Let $G$ be the augmented Gr\"otzsch graph and let $G'$ denote its subgraph
isomorphic to the Gr\"otzsch graph, obtained by deleting the vertices $v_i'$, $0\leq i\leq 4$.
Consider an injective $1$-dimensional realization $(G,p)$.
By Theorem~\ref{notcover} there is a stretched cycle $C$ on vertices $\{x_1,x_2,\dots, x_k\}$ in $(G',p|_{V(G')})$.
Since $G'$ is triangle-free, we must have $k\geq 4$.
Let $(H,p|_{V(H)})$ be a maximal universally rigid subframework of $(G,p)$ with $V(C)\subseteq V(H)$.
Such a framework exists, since the subframework of the stretched cycle $C$ is universally rigid. 
We shall prove that $H=G$.


The structure of $G'$ and the fact that $C$ contains a path on three
vertices which is disjoint from $w$ implies that we must have two vertices in $C-w$ with identical
indices or two vertices whose indices differ by two. Formally, 
either (1) there exists a pair of vertices $u_i,v_i$ in $C$,
or (2) there exists a pair of vertices $u_i,v_{i+2}$ (or $u_i,u_{i+2}$, $v_i,v_{i+2}$, $v_i,u_{i+2}$),
for some $0\leq i\leq 4$.

Let us consider case (1). By symmetry we may assume that $u_1,v_1\in V(C)$. 
Then, since $u_1,v_1$ are both neighbours of $v_2$ and $v_2'$,
Lemma~\ref{2ext} implies that $v_2,v_2'\in V(H)$ holds. We can apply a similar argument
to $v_3,v_3'$, and so on around the cycle, to deduce that $H$ contains all $v$- and $v'$-vertices.
Finally, the $u$-vertices and vertex $w$ can also be added by degree-2 extensions.
Thus $H=G$ and the theorem follows.

Next consider case (2). Suppose that, say, $u_1,v_3\in V(C)$. Then, since 
$u_1,v_3$ are both neighbours of $v_2$ and $v_2'$,
Lemma~\ref{2ext} implies that $v_2,v_2'\in V(H)$ holds.
The rest of the argument is identical to that of case (1).
This completes the proof.
\end{proof}

Since the augmented Gr\"otzsch graph is triangle-free and, by Theorem~\ref{mainG}, generically
universally rigid in $\R^1$, it follows from Lemma~\ref{3prop}(i) that it is a
counterexample to Conjecture~\ref{inductive}.

It may be interesting to find 
graphs with arbitrarily large girth which are generically
universally rigid in $\R^1$ (or possibly in higher dimensions).

\section{Operations}

\subsection{Combining graphs along disjoint edges}

If $G$ has an independent edge cut $J$, then the removal of $J$ from $G$
results in two smaller (sub)graphs. The reversal of this operations can be
defined as follows. Let \(G = (V, E)\), \(H = (U, F)\) be two disjoint graphs.
Let \(v_1, v_2, \dots, v_k \in V\) and \(u_1, u_2, \dots, u_k \in U\) be distinct vertices.
The {\it join} $G \sqcup H$ of \(G\) and \(H\) is the graph
on vertex set $V\cup U$ with edge set $E\cup F\cup
%
\{ v_iu_i : i \in \{1, 2, \dots, k\} \}.$
We say that $G \sqcup H$ is obtained from $G$ and $H$ by a join operation {\it along} $k$ {\it edges}.
See Figure~\ref{fig:join}.
For example, $B_n$ is the join of two complete graphs on $n$ vertices along $n$ edges.

\begin{figure}[ht]
 \centering
 \includegraphics{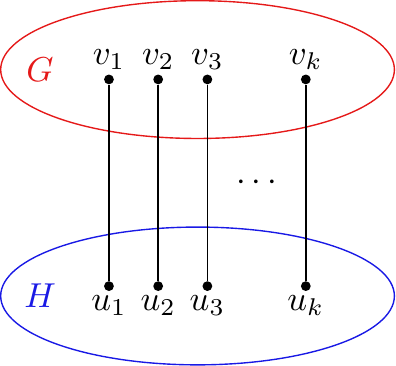}
 \caption{The graph obtained from $G$ and $H$ by a join operation along $k$ edges.}
 \label{fig:join}
\end{figure}

\begin{thm}
\label{join}
Let $G$ and $H$ be generically universally rigid graphs in $\R^1$ on at least $k$ vertices and
let $G \sqcup H$ be obtained from $G$ and $H$ by a join operation along $k$ edges.
If $k\geq 4$, then $G \sqcup H$ is generically universally rigid in $\R^1$.
\end{thm}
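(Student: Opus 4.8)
The plan is to use the standard stress-matrix / PSD certificate characterization of universal rigidity together with a genericity argument, exploiting the fact that the join identifies $G$ and $H$ along a matching $\{v_iu_i\}$ of size $k\ge 4$. First I would fix generic configurations $p$ of $G$ and $q$ of $H$ on the line; since $G$ and $H$ are generically universally rigid, each admits a positive semidefinite equilibrium stress matrix $\Omega_G$, $\Omega_H$ of maximal rank ($|V|-2$ and $|U|-2$ respectively) whose kernel is spanned by the all-ones vector and the coordinate vector of the configuration — this is the known sufficient condition for universal rigidity in the generic setting (Connelly; Gortler--Thurston), and in $\R^1$ maximal-rank PSD stress is also necessary generically. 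The task is to combine $\Omega_G$ and $\Omega_H$ into a PSD stress matrix $\Omega$ for $G\sqcup H$ whose kernel is exactly two-dimensional (spanned by $\mathbf 1$ and the combined configuration vector), which certifies universal rigidity.

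The key step is to handle the $k$ matching edges $v_iu_i$. Place $G$ and $H$ on the line so that the matching edges all have, say, length $1$ and the two configurations are ``generically compatible''; assign each matching edge a stress scalar $\omega_i$. Then $\Omega$ decomposes block-wise: the $G$-block is $\Omega_G$ plus a nonnegative diagonal correction $D_G=\mathrm{diag}$ supported on $\{v_i\}$ with entries determined by the $\omega_i$ (and by the lengths), similarly $\Omega_H + D_H$ on the $H$-block, and the off-diagonal $G$–$H$ block is a $0/(-\omega_i)$ ``partial permutation'' matrix $-W$. The equilibrium condition forces, at each $v_i$ and $u_i$, a linear relation among the stresses incident there; because each matching vertex has degree only $1$ into the matching, these are easy to satisfy by choosing the $\omega_i$ appropriately — here is where $k\ge 4$ (rather than $k\ge 2$) enters: we need enough matching edges that, after adding the diagonal corrections, the \emph{rank} does not drop, i.e.\ the combined kernel stays $2$-dimensional rather than picking up spurious flexes; with $k\le 3$ one can engineer an affine flex of $G\sqcup H$ in higher dimension (this is exactly what makes $B_3$ not universally rigid), whereas $k\ge 4$ matching edges in general position pin down the relative placement of the two pieces up to congruence.

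Concretely, after constructing $\Omega$ I would verify two things: (a) $\Omega\succeq 0$, which follows because $\Omega_G + D_G \succeq \Omega_G \succeq 0$ and $\Omega_H+D_H\succeq 0$ and the cross term $-W$ can be absorbed by a congruence / Schur-complement argument once the diagonal corrections are chosen large enough relative to the $\omega_i$ — alternatively, build $\Omega$ directly as a sum of rank-one PSD terms, one for each matching edge, plus the block-diagonal $\Omega_G\oplus\Omega_H$; and (b) $\ker\Omega = \mathrm{span}\{\mathbf 1, \bar p\}$ where $\bar p$ is the configuration of $G\sqcup H$. For (b), a vector $z$ in the kernel restricts to a kernel vector of $\Omega_G+D_G$ on $V$ and of $\Omega_H+D_H$ on $U$; I would argue that $\ker(\Omega_G+D_G)$ is still just $\{\mathbf 1,p\}$ (the diagonal perturbation is PSD, so it can only shrink the kernel, and genericity prevents an accidental new kernel vector), and then the $k\ge 4$ matching constraints linking the $G$-part and $H$-part of $z$ force the two affine pieces to agree up to a common affine map, hence $z\in\mathrm{span}\{\mathbf 1,\bar p\}$. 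This last linking step, showing $4$ generic matching edges suffice to rigidify the relative position, is the main obstacle and the place where the hypothesis $k\ge 4$ is genuinely used; everything else is bookkeeping with block matrices and an appeal to genericity to rule out degenerate rank drops.
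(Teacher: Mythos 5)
Your approach --- building a PSD equilibrium stress certificate for the join --- is genuinely different from the paper's, which is elementary and avoids stress matrices entirely: given a generic $(G\sqcup H,p)$ and an arbitrary equivalent $(G\sqcup H,q)$ in $\R^d$, generic universal rigidity of $G$ and $H$ forces $q(v)=\mathbf{c}_G+p(v)\mathbf{d}_G$ on $V$ and $q(u)=\mathbf{c}_H+p(u)\mathbf{d}_H$ on $U$ for unit vectors $\mathbf{d}_G,\mathbf{d}_H$; the four matching-edge length equations are then reduced by three successive eliminations to $2(1-\mathbf{d}_G\cdot\mathbf{d}_H)f=0$, where $f$ is an explicit nonzero rational polynomial in the eight coordinates $p(v_1),\dots,p(u_4)$, so genericity forces $\mathbf{d}_G=\mathbf{d}_H$, and a short separate argument (two matching edges of equal length would violate genericity) forces $\mathbf{c}_G=\mathbf{c}_H$. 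That is where $k\geq 4$ is used: four equations in the unknowns $\mathbf{c},\mathbf{d}$ survive three eliminations with a nontrivial residue.

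As written, your proposal has genuine gaps. First, putting a stress $\omega_i$ on the matching edge $v_iu_i$ is not just a nonnegative diagonal correction: it breaks the equilibrium condition $\Omega\bar p=0$ at $v_i$ and $u_i$, since the load $\omega_i(p(v_i)-p(u_i))$ at $v_i$ must be resolved by altering the internal stresses of $G$ on the edges incident to $v_i$ (a correction $\Delta_G$ you never construct), and after that alteration there is no reason $\Omega_G+\Delta_G$ remains PSD --- rescaling $\Omega_G$ by a large constant does not help because $\Delta_G$ can be negative on $\ker\Omega_G=\mathrm{span}\{\mathbf 1,p|_V\}$, and the Schur-complement step cannot be run by ``choosing the diagonal corrections large'' since they are determined by the $\omega_i$. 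Second, the step you yourself label ``the main obstacle'' --- that four generic matching edges force $\ker\Omega=\mathrm{span}\{\mathbf 1,\bar p\}$ --- is precisely where the content of the theorem lies, and you give no argument; the prism ($k=3$) shows this is not formal. Third, your claim that a maximal-rank PSD stress is also \emph{necessary} for generic universal rigidity is not the correct form of the Gortler--Thurston characterization (which requires iterated certificates), though only sufficiency is needed for your direction. The certificate route may be salvageable, but the two hard steps (an equilibrium stress that stays PSD across the join, and the two-dimensional kernel) are asserted rather than proved.
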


\begin{proof}
Let \(G = (V, E)\), \(H = (U, F)\),
$B=G \sqcup H$. We may assume that $k=4$.
Consider a $1$-dimensional generic realization $(B,p)$.
Suppose that $(B,q)$ is an equivalent realization in $\R^d$, for some $d\geq 1$.
	We shall prove that \(p\) and \(q\) are congruent.

	Note that 
	$(G,p|_V)$ is a generic \(1\)-dimensional realization of \(G\), and
	$(G,q|_V)$ is an equivalent realization. Hence, since \(G\) is generically universally rigid in $\R^1$, 
	it follows that 
	$(G,p|_V)$ is congruent to $(G,q|_V)$.
	Analogously, we also have that $(H,p|_U)$ is congruent to $(H,q|_U)$.
	%
	Hence, there exists \(\mathbf{c}_G, \mathbf{d}_G, \mathbf{c}_H, \mathbf{d}_H \in \mathbb{R}^d\), with \(\|\mathbf{d}_G\| = \|\mathbf{d}_H\| = 1\), such that 
	\begin{align*}
		q(v) = \mathbf{c}_G + p(v) \mathbf{d}_G &\qquad \forall v \in V, \qquad\text{and} \\
		q(u) = \mathbf{c}_H + p(u) \mathbf{d}_H &\qquad \forall u \in U.
	\end{align*}


	\begin{figure}[ht]
		\centering
 \includegraphics{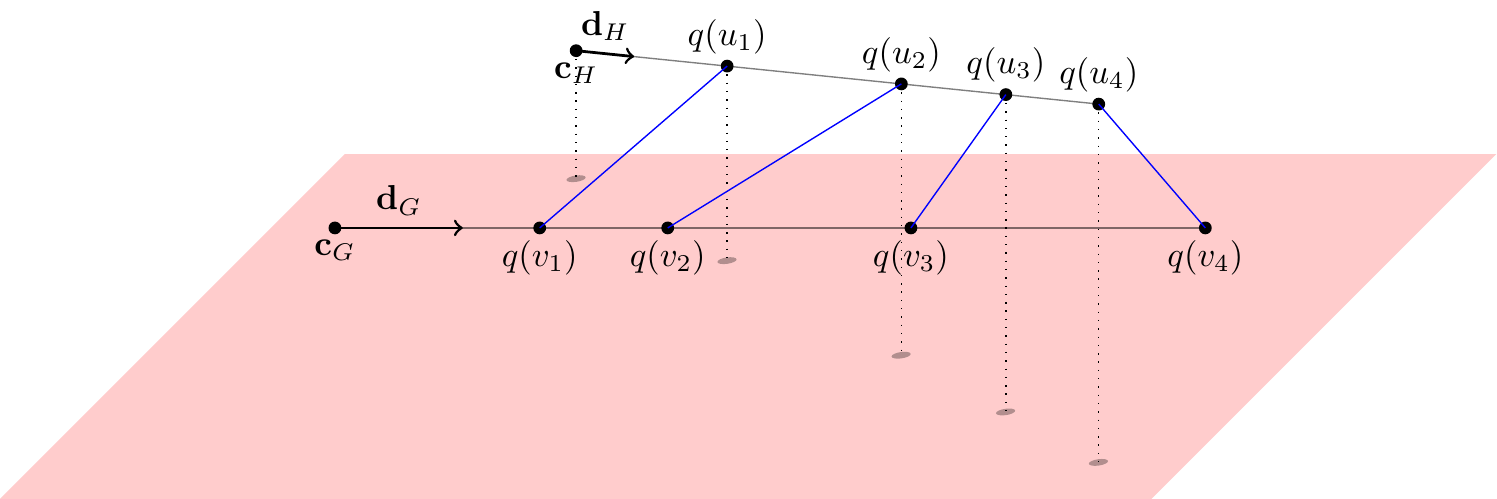}
		\caption{}
	\end{figure}


\noindent	Since \(p\) and \(q\) are equivalent, for each \(i \in \{1, 2, 3, 4\}\), we have
	\begin{equation*}
		\| q(v_i) - q(u_i) \|^2 = \| p(v_i) - p(u_i) \|^2.
	\end{equation*}

\noindent	By changing the square of norms to dot product we obtain that, for each \(i \in \{1, 2, 3, 4\}\),
	\begin{equation*}
		\Big( (\mathbf{c}_G + p(v_i)\mathbf{d}_G) - (\mathbf{c}_H + p(u_i)\mathbf{d}_H) \Big)
		\cdot
		\Big( (\mathbf{c}_G + p(v_i)\mathbf{d}_G) - (\mathbf{c}_H + p(u_i)\mathbf{d}_H) \Big)
		- (p(v_i) - p(u_i))^2 = 0.
	\end{equation*}

\noindent	This gives, by using \(\|\mathbf{d}_G\| = \|\mathbf{d}_H\| = 1\), that for each \(i \in \{1, 2, 3, 4\}\),
 \begin{equation} \label{eq:four-terms}
		\begin{aligned}
			\mathbf{c}_G \cdot \mathbf{c}_G - 2 \mathbf{c}_G \cdot \mathbf{c}_H + \mathbf{c}_H \cdot \mathbf{c}_H + {} & \\
			2(1 - \mathbf{d}_G \cdot \mathbf{d}_H) \, p(u_i) p(v_i) + {} & \\
			2(\mathbf{c}_H \cdot \mathbf{d}_H - \mathbf{c}_G \cdot \mathbf{d}_H ) p(u_i) + {} & \\
			2(\mathbf{c}_G \cdot \mathbf{d}_G - \mathbf{c}_H \cdot \mathbf{d}_G) p(v_i) &= 0.
		\end{aligned}
	\end{equation}

\noindent By applying equation~\eqref{eq:four-terms} for \(i \in \{1, 2, 3\}\), as well as for \(i = 4\), and by subtracting, we obtain that for each \(i \in \{1, 2, 3\}\),
	\begin{equation*}
		\begin{aligned}
			{ 2(1 - \mathbf{d}_G \cdot \mathbf{d}_H) \, (p(u_i) p(v_i) - p(u_4) p(v_4))} & \mathrel{+} \\
			{ 2(\mathbf{c}_H \cdot \mathbf{d}_H - \mathbf{c}_G \cdot \mathbf{d}_H ) (p(u_i) - p(u_4)) } & \mathrel{+} & \\
			{2(\mathbf{c}_G \cdot \mathbf{d}_G - \mathbf{c}_H \cdot \mathbf{d}_G) (p(v_i) - p(v_4)) } &= 0.
		\end{aligned}
	\end{equation*}

\noindent Let us	define \({f_1(i) = p(u_i) p(v_i) - p(u_4) p(v_4)}\), \({f_2(i) = p(u_i) - p(u_4)}\), and \({f_3(i) = p(v_i) - p(v_4)}\).
 Then we have, for each \(i \in \{1, 2, 3\}\),
	\begin{equation} \label{eq:three-terms}
		{2(1 - \mathbf{d}_G \cdot \mathbf{d}_H) f_1(i)} + 
		{2(\mathbf{c}_H \cdot \mathbf{d}_H - \mathbf{c}_G \cdot \mathbf{d}_H) f_2(i)} + 
		{2(\mathbf{c}_G \cdot \mathbf{d}_G - \mathbf{c}_H \cdot \mathbf{d}_G) f_3(i)} = 0.
	\end{equation}

\noindent
By applying equation~\eqref{eq:three-terms} for \(i \in \{1, 2\}\), as well as for \(i = 3\), and by multiplying the respective equations by \(f_3(3)\) and \(f_3(i)\), and then subtracting, we obtain that for each \(i \in \{1, 2\}\),
	\begin{equation*}
		{2(1 - \mathbf{d}_G \cdot \mathbf{d}_H)} ( f_1(i)f_3(3) - f_1(3)f_3(i) ) + 
		{2(\mathbf{c}_H \cdot \mathbf{d}_H - \mathbf{c}_G \cdot \mathbf{d}_H)} ( f_2(i)f_3(3) - f_2(3)f_3(i)) = 0
	\end{equation*}

\noindent
Let us	define \({f_4(i) = f_1(i)f_3(3) - f_1(3)f_3(i)}\), and \({f_5(i) = f_2(i)f_3(3) - f_2(3)f_3(i)}\).
	Then we have
	\begin{gather}
		{2(1 - \mathbf{d}_G \cdot \mathbf{d}_H) f_4(1)} + 
		{2(\mathbf{c}_H \cdot \mathbf{d}_H - \mathbf{c}_G \cdot \mathbf{d}_H) f_5(1)} = 0, \label{eq:two-terms1} \\
		{2(1 - \mathbf{d}_G \cdot \mathbf{d}_H) f_4(2)} + 
		{2(\mathbf{c}_H \cdot \mathbf{d}_H - \mathbf{c}_G \cdot \mathbf{d}_H) f_5(2)} = 0. \label{eq:two-terms2}
	\end{gather}

\noindent
Let us	define \(f = f_4(1)f_5(2) - f_4(2)f_5(1)\). By multiplying equation~\eqref{eq:two-terms1} by \(f_5(2)\), equation~\eqref{eq:two-terms2} by \(f_5(1)\), and taking the difference, we obtain
	\begin{equation*}
		2(1 - \mathbf{d}_G \cdot \mathbf{d}_H) f = 0.
	\end{equation*} 

	Therefore, \(f = 0\) or \(\mathbf{d}_G \cdot \mathbf{d}_H = 1\) must hold.
In the former case, \(f\) is a non-zero rational polynomial in eight variables with \(p(v_1)\), \(p(v_2)\), \(p(v_3)\), \(p(v_4)\), \(p(u_1)\), \(p(u_2)\), \(p(u_3)\), and \(p(u_4)\) as a root, which contradicts the genericity of \(p\).

The explicit form of the polynomial is as follows:
\begin{equation*}
  \begin{aligned}
    f =
    -& {p(u_1)} {p(u_3)} {p(v_1)} {p(v_2)} {p(v_3)} 
    + {p(u_2)} {p(u_3)} {p(v_1)} {p(v_2)} {p(v_3)} 
    + {p(u_1)} {p(u_4)} {p(v_1)} {p(v_2)} {p(v_3)} \\
    -& {p(u_2)} {p(u_4)} {p(v_1)} {p(v_2)} {p(v_3)} 
    + {p(u_1)} {p(u_2)} {p(v_1)} {p(v_3)}^{2} 
    - {p(u_2)} {p(u_3)} {p(v_1)} {p(v_3)}^{2} \\
    -& {p(u_1)} {p(u_4)} {p(v_1)} {p(v_3)}^{2} 
    + {p(u_3)} {p(u_4)} {p(v_1)} {p(v_3)}^{2} 
    - {p(u_1)} {p(u_2)} {p(v_2)} {p(v_3)}^{2} \\
    +& {p(u_1)} {p(u_3)} {p(v_2)} {p(v_3)}^{2} 
    + {p(u_2)} {p(u_4)} {p(v_2)} {p(v_3)}^{2} 
    - {p(u_3)} {p(u_4)} {p(v_2)} {p(v_3)}^{2} \\
    +& {p(u_1)} {p(u_3)} {p(v_1)} {p(v_2)} {p(v_4)} 
    - {p(u_2)} {p(u_3)} {p(v_1)} {p(v_2)} {p(v_4)} 
    - {p(u_1)} {p(u_4)} {p(v_1)} {p(v_2)} {p(v_4)} \\
    +& {p(u_2)} {p(u_4)} {p(v_1)} {p(v_2)} {p(v_4)} 
    - 2 {p(u_1)} {p(u_2)} {p(v_1)} {p(v_3)} {p(v_4)} 
    + {p(u_1)} {p(u_3)} {p(v_1)} {p(v_3)} {p(v_4)} \\
    +& {p(u_2)} {p(u_3)} {p(v_1)} {p(v_3)} {p(v_4)} 
    + {p(u_1)} {p(u_4)} {p(v_1)} {p(v_3)} {p(v_4)} 
    + {p(u_2)} {p(u_4)} {p(v_1)} {p(v_3)} {p(v_4)} \\
    -& 2 {p(u_3)} {p(u_4)} {p(v_1)} {p(v_3)} {p(v_4)} 
    + 2 {p(u_1)} {p(u_2)} {p(v_2)} {p(v_3)} {p(v_4)} 
    - {p(u_1)} {p(u_3)} {p(v_2)} {p(v_3)} {p(v_4)} \\
    -& {p(u_2)} {p(u_3)} {p(v_2)} {p(v_3)} {p(v_4)} 
    - {p(u_1)} {p(u_4)} {p(v_2)} {p(v_3)} {p(v_4)} 
    - {p(u_2)} {p(u_4)} {p(v_2)} {p(v_3)} {p(v_4)} \\
    +& 2 {p(u_3)} {p(u_4)} {p(v_2)} {p(v_3)} {p(v_4)} 
    - {p(u_1)} {p(u_3)} {p(v_3)}^{2} {p(v_4)} 
    + {p(u_2)} {p(u_3)} {p(v_3)}^{2} {p(v_4)} \\
    +& {p(u_1)} {p(u_4)} {p(v_3)}^{2} {p(v_4)} 
    - {p(u_2)} {p(u_4)} {p(v_3)}^{2} {p(v_4)} 
    + {p(u_1)} {p(u_2)} {p(v_1)} {p(v_4)}^{2} \\
    -& {p(u_1)} {p(u_3)} {p(v_1)} {p(v_4)}^{2} 
    - {p(u_2)} {p(u_4)} {p(v_1)} {p(v_4)}^{2} 
    + {p(u_3)} {p(u_4)} {p(v_1)} {p(v_4)}^{2} \\
    -& {p(u_1)} {p(u_2)} {p(v_2)} {p(v_4)}^{2} 
    + {p(u_2)} {p(u_3)} {p(v_2)} {p(v_4)}^{2} 
    + {p(u_1)} {p(u_4)} {p(v_2)} {p(v_4)}^{2} \\
    -& {p(u_3)} {p(u_4)} {p(v_2)} {p(v_4)}^{2} 
    + {p(u_1)} {p(u_3)} {p(v_3)} {p(v_4)}^{2} 
    - {p(u_2)} {p(u_3)} {p(v_3)} {p(v_4)}^{2} \\
    -& {p(u_1)} {p(u_4)} {p(v_3)} {p(v_4)}^{2} 
    + {p(u_2)} {p(u_4)} {p(v_3)} {p(v_4)}^{2}.
  \end{aligned}
\end{equation*} 

In the latter case, \(\mathbf{d}_G \cdot \mathbf{d}_H = 1\) (along with \(\|\mathbf{d}_G\| = \|\mathbf{d}_H\| = 1\)) imply that \(\mathbf{d}_G = \mathbf{d}_H\).
 Let \(\mathbf{d} = \mathbf{d}_G = \mathbf{d}_H\)	and \(\mathbf{t} = \mathbf{c}_H - \mathbf{c}_G\).

Suppose that \(\mathbf{t} \neq \mathbf{0}\). By applying an isometry of $\R^d$ to $(B,q)$ we may suppose that 
$p|_V=q|_V$. This implies that $(H,p|_U)$ and $(H,q|_U)$ are two congruent realizations of $H$
on two parallel lines $L_p$ and $L_q$, with the same vertex ordering.
Furthermore, $L_p$ is the line that contains $(B,p)$.
Then $q(u_i)-p(u_i)$ are parallel for $1\leq i\leq 4$.
Since $(B,p)$ and $(B,q)$ are equivalent, we have 
 $|| q(v_i) - q(u_i) || = || p(v_i) - p(u_i) ||$ for $i=1,2$.
But this gives 
$|| p(v_1) - p(u_1) || = || p(v_2) - p(u_2) ||$, contradicting the genericity of \(p\).

	\begin{figure}[ht]
		\centering
 \includegraphics{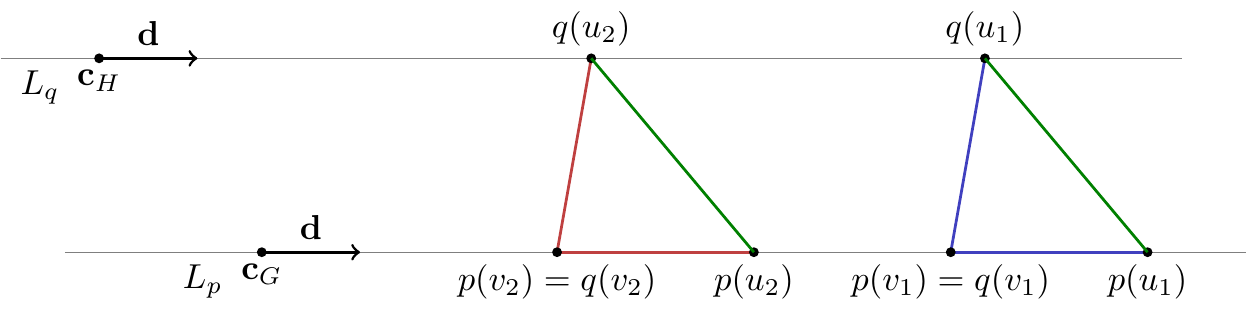}
 \caption{Diagram for the case \(\mathbf{d}_G = \mathbf{d}_H\) and \(\mathbf{c}_G \neq \mathbf{c}_H\). The green segments are parallel, the red segments have equal lengths, and the blue segments have equal lengths. It must be the case that the red and blue lengths are equal.}
 \label{fig:latter-case}
	\end{figure}

 Hence \(\mathbf{t} = \mathbf{0}\), which gives \(\mathbf{d}_G = \mathbf{d}_H\) and \(\mathbf{c}_G = \mathbf{c}_H\). Therefore \(p\) is congruent to \(q\), as desired.
	Thus \(B\) is generically universally rigid in $\R^1$.
\end{proof}

We can use Theorem~\ref{join} and the results of the previous section to construct an infinite family of
triangle free generically universally rigid graphs on the line. 

Theorem~\ref{join} does not hold for $k\leq 3$. 
For example, the join of two $K_3$ graphs along three edges (the so-called prism, or Desargue graph)
is not generically universally rigid in $\R^1$.
For a detailed analysis of this graph see \cite{C22}.

\subsection{Degree-2 extension}



Let $(G,p)$ be a framework on the line with $G=(V,E)$. 
A pair of vertices $\{u,v\}$, $u,v\in V$ 
is called \emph{universally linked} in $(G,p)$ 
if $\|q(u)-q(v)\|=\|p(u)-p(v)\|$ holds for all frameworks
$(G,q)$ which are equivalent to $(G,p)$ (in all dimensions).

We believe that if $\{u,v\}$ is not universally linked in $(H,p)$, and $(G,p)$ is
obtained from $(H,p)$ by a degree-2 extension, then $\{u,v\}$ is not
universally linked in $(G,p)$.
We can prove the following somewhat weaker statement.

\begin{thm}
\label{2add}
Suppose that $H$ is not generically universally rigid.
Let $G$ be obtained
from $H$ by a degree-2 extension. Then
$G$ is not generically universally rigid.
\end{thm}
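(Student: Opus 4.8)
The plan is to prove the contrapositive in the following sharpened form: if $G$ is generically universally rigid and $G$ is obtained from $H$ by a degree-2 extension at a new vertex $w$ with new edges $wx,wy$, then $H$ is generically universally rigid. So I would start with a generic realization $(H,p)$ of $H$ on the line, and I want to show it is universally rigid. First I extend $p$ to a configuration $p'$ of $G$ by choosing $p'(w)$ to be a new real number, algebraically independent of all coordinates of $p$, so that $(G,p')$ is a generic realization of $G$; by hypothesis $(G,p')$ is then universally rigid in $\R^1$.

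The core step is to transfer universal rigidity from $(G,p')$ back to $(H,p)$. Suppose $(H,q)$ is a framework in some $\R^{d}$ equivalent to $(H,p)$. Since $p'(x)\neq p'(y)$ — both are among the algebraically independent coordinates, hence distinct — I would like to place a point $q(w)$ so that $\|q(w)-q(x)\| = \|p'(w)-p'(x)\|$ and $\|q(w)-q(y)\| = \|p'(w)-p'(y)\|$, producing a framework $(G,q')$ equivalent to $(G,p')$. Geometrically this is the classical two-distance problem: in $\R^{d}$ (for $d\ge 2$, or after embedding $\R^1\hookrightarrow\R^2$ if needed) the locus of points at prescribed distances $r_x$ from $q(x)$ and $r_y$ from $q(y)$ is nonempty provided $|r_x-r_y|\le \|q(x)-q(y)\|\le r_x+r_y$. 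One has to check these triangle inequalities: from equivalence of $(H,p)$ and $(H,q)$ we get $\|q(x)-q(y)\| \ge \big|\,\|p(x)-p(w)\|_{\text{(as reals)}}\dots\big|$ — more precisely, since on the line $p'(x),p'(y),p'(w)$ satisfy the (possibly degenerate) triangle equalities/inequalities among $r_x=|p'(x)-p'(w)|$, $r_y=|p'(y)-p'(w)|$ and $\|p(x)-p(y)\|$, and since $\|q(x)-q(y)\|=\|p(x)-p(y)\|$, the needed inequalities $|r_x-r_y|\le \|q(x)-q(y)\| \le r_x+r_y$ hold automatically. Hence $q(w)$ exists and $(G,q')$ is equivalent to $(G,p')$.

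By universal rigidity of $(G,p')$, the framework $(G,q')$ is congruent to $(G,p')$; restricting the congruence to $V(H)$ shows $(H,q)$ is congruent to $(H,p)$. Since $(H,q)$ was an arbitrary equivalent framework in any dimension, $(H,p)$ is universally rigid, and as $(H,p)$ was an arbitrary generic realization, $H$ is generically universally rigid — contradicting the hypothesis, which proves the theorem.

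The main obstacle I anticipate is the genericity bookkeeping rather than the geometry: I must argue that appending one fresh algebraically independent coordinate to a generic $p$ yields a \emph{generic} realization of $G$ (so that universal rigidity of $G$ applies), and separately that the two-distance realizability inequalities for $q(w)$ follow from equivalence of $(H,p)$ and $(H,q)$ together with the collinear configuration of $x,y,w$ under $p'$ — in particular handling the degenerate case where $p'(w)$ lies between $p'(x)$ and $p'(y)$ on the line, where one of the triangle inequalities is an equality. A secondary point to be careful about is that if $q$ happens to land in $\R^1$ one may need to view it inside $\R^2$ to guarantee a solution $q(w)$ exists; this is harmless since universal rigidity is a statement about all dimensions.
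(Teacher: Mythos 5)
Your contrapositive framing is fine, and the geometric heart of your argument --- lifting an equivalent realization $(H,q)$ to an equivalent realization of $G$ by solving the two-distance problem for $q(w)$ --- is exactly the mechanism the paper uses. But there is a genuine gap at the step where you claim the triangle inequalities ``hold automatically.'' You invoke $\|q(x)-q(y)\|=\|p(x)-p(y)\|$, yet $xy$ need not be an edge of $H$ (a degree-2 extension only requires $x,y$ to be distinct vertices), so equivalence of $(H,q)$ with $(H,p)$ gives no control whatsoever on $\|q(x)-q(y)\|$. Concretely: if you place $p'(w)$ outside the segment $[p'(x),p'(y)]$ then $|r_x-r_y|=|p'(x)-p'(y)|$ and you need $\|q(x)-q(y)\|\ge |p'(x)-p'(y)|$; if you place it inside, you need $\|q(x)-q(y)\|\le |p'(x)-p'(y)|$. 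Whichever side you commit to, there can be an equivalent $q$ violating the required inequality (indeed such $q$ exist exactly when $\{x,y\}$ fails to be universally linked in $(H,p)$), and for that $q$ no lift $q(w)$ exists, so the congruence conclusion does not follow.

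The repair is to reverse the order of quantifiers: fix the offending equivalent pair $(H,p)$, $(H,q)$ first, and only then choose $p'(w)$ --- still generic over the coordinates of $p$, since such points are dense on the line --- placing it outside or inside the segment $[p'(x),p'(y)]$ according to whether $\beta:=\|q(x)-q(y)\|$ is at least or less than $\alpha:=|p(x)-p(y)|$. This is precisely what the paper's proof does, phrased via a non-universally-linked pair $\{u,v\}$ rather than via the contrapositive. Even then one case remains out of reach of your construction: $\beta=0$, i.e.\ $q(x)=q(y)$. A point at prescribed distances $r_x,r_y$ from a single point exists only if $r_x=r_y$, which forces $p'(w)$ to be the midpoint of $p'(x)$ and $p'(y)$ and is therefore never generic. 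The paper handles this by moving $q$ along a Bezdek--Connelly monotone continuous motion to a nearby equivalent realization $q'$ with $q'(x)\ne q'(y)$ that still witnesses a non-universally-linked pair, using that $K_n-e$ is generically universally rigid to guarantee such a pair exists. Your proposal needs both repairs to go through.
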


\begin{proof}
We may assume that $|V(H)|\geq 4$, since the statement is easy to verify when $H$
has at most three vertices.
By our assumption $H$ is not generically universally rigid, hence there exists a 
generic 1-dimensional realization $(H,p)$ and a
pair
$\{u,v\}$ of vertices of $H$ which is not universally linked in $(H,p)$. Thus there exists an equivalent framework 
$(H,q)$, such that
$|p(u)-p(v)|\not=|q(u)-q(v)|$. Let $G$ be obtained from $H$ by adding a vertex $w$ and edges $wx,wy$.
Let $\alpha =|p(x)-p(y)|$ and $\beta =|q(x)-q(y)|$.
Since $p$ is generic, we have $\alpha \not= 0$. 

Suppose first that we have $\beta \geq \alpha$. Then we can choose a point $r$ on the line (in the complement of the
line segment connecting $p(x)$ and $p(y)$), 
for which the set $r\cup \{p(z) : z\in V(H)\}$ is generic, and $|r-p(x)|+|r-p(y)|\geq \beta$.
Since we also have $||r-p(x)|-|r-p(y)||=\alpha \leq \beta$, we can find a point $s$ (in a plane
that contains $q(x),q(y)$, and third point of $(H,q)$) for which $|s-q(x)|=|r-p(x)|$ and $|s-q(y)|=|r-p(y)|$ holds.
Then by adding $w$ to $(H,p)$ so that $p(w)=r$, and adding $w$ to $(H,q)$ so that $q(w)=s$ we obtain
a pair of equivalent realizations of $G$ for which $(G,p)$ is generic and
$\{u,v\}$ is not universally linked in $(G,p)$.

Next suppose that $\alpha > \beta$. If $\beta\not= 0$ then we can use a similar argument as follows.
We choose a point $r$ on the line (in the 
interior of the line segment connecting $p(x)$ and $p(y)$), 
for which the set $r\cup \{p(z) : z\in V(H)\}$ is generic, and $||r-p(x)|-|r-p(y)||\leq \beta$.
Since we also have $|r-p(x)|+|r-p(y)|=\alpha > \beta$, we can find a point $s$ (in a plane
that contains $q(x),q(y)$, and third point of $(H,q)$) for which $|s-q(x)|=|r-p(x)|$ and $|s-q(y)|=|r-p(y)|$ holds.
Then by adding $w$ to $(H,p)$ so that $p(w)=r$, and adding $w$ to $(H,q)$ so that $q(w)=s$ we obtain
a pair of equivalent realizations of $G$ for which $(G,p)$ is generic and
$\{u,v\}$ is not universally linked in $(G,p)$.
 
Finally, we consider the case when $\beta=0$, that is, $q(x)$ and $q(y)$ are coincident. Then we modify
$(H,q)$ by applying a result of Bezdek and Connelly \cite{BK}, which states that there is a continuous motion $\Phi:[0,1]\to \R^{2d|V|}$
from $(H,p)$ to $(H,q)$ in $\R^{2d}$, where $d$ is the affine dimension of $(H,q)$. Moreover, the distances between all
pairs of vertices change in a monotone way during the motion. 
If there is a realization $(H,q')$ on the trajectory of this motion for which
$q'(x)\not= q'(y)$ and
$|p(u)-p(v)|\not=|q'(u)-q'(v)|$,
then the theorem follows by applying the above arguments to $(H,q')$
in place of $(H,q)$.

If there is no such realization then
$x$ and $y$ becomes
coincident (at time $t\in [0,1]$, for some $0< t < 1$) before the distance between $u$ and $v$ begins to change.
However, there must be another pair of non-adjacent vertices $\{u',v'\}$ for which
the trajectory in $[0,t)$ contains a framework $(H,q')$ with
$q'(x)\not= q'(y)$ and
$|p(u')-p(v')|\not=|q'(u')-q'(v')|$. This follows from the fact that
the graph
$K_{n}-e$, obtained from a complete graph by deleting an edge, is generically universally rigid in $\R^1$, for $n\geq 4$ (which is easy to verify).
So it is not possible that all but one of the pairwise distances stay the same in, say $[0,\frac{t}{2}]$.
The theorem then follows by applying the above arguments to $(H,q')$
in place of $(H,q)$ and $\{u',v'\}$ in place of $\{u,v\}$.
\end{proof}

Note that Theorem~\ref{2add} does not hold if we replace degree-2 extension by degree-3 extension:
consider $K_4-e$ and remove a vertex of degree three.

\section{A lower bound on the number of edges}

The following question was posed in \cite{JN}.

\begin{question} \cite[Question 3.6]{JN}
\label{q}
Let $G=(V,E)$ be generically universally rigid in $\R^1$.
Does this imply that $|E|\geq 2|V|-3$?
\end{question}

If we replace universally rigid by globally rigid (resp. rigid), then the best possible
lower bound is $|V|$ (resp. $|V|-1$), which is attained by the family of cycles (resp.
trees). 
%
As an application of Theorem~\ref{2add} we show that 
generically universally rigid graphs need more edges indeed, namely, at least 
%
$\frac{3}{2}|V|$ (assuming $|V|\geq 6$). This bound is the first step towards an
affirmative answer to Question \ref{q}. Note that $2|V|-3$ would be best possible,
as shown by the graphs $K_{|V|-2,2}+e$, obtained from the complete bipartite graph $K_{|V|-2,2}$
by adding an edge.


\begin{thm}
\label{1.5}
Let $G=(V,E)$ be a graph with $|V|\geq 6$ and $|E| < \frac{3}{2}|V|$.
Then $G$ is not generically universally rigid in $\R^1$.
\end{thm}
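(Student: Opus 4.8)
The plan is to argue by induction on $|V|$, using Theorem~\ref{2add} to strip off vertices of degree $2$, and to handle the base case together with the structure that forces such a vertex to exist. The key combinatorial observation is that if $|E| < \frac32|V|$ then the average degree is less than $3$, so $G$ has a vertex of degree at most $2$; moreover vertices of degree $0$ or $1$ are trivially harmless (a graph with an isolated vertex or a pendant vertex cannot be generically universally rigid on the line once $|V|\ge 3$, since the isolated/pendant vertex has a nontrivial flex in $\R^2$), so the interesting case is a vertex $w$ of degree exactly $2$. Deleting $w$ yields a graph $H$ with $|V(H)| = |V|-1$ and $|E(H)| = |E|-2$, and one checks that $|E(H)| < \frac32|V(H)|$ is \emph{not} automatically preserved — indeed $|E|-2 < \frac32(|V|-1)$ is equivalent to $|E| < \frac32|V| + \frac12$, which does follow from $|E| < \frac32|V|$ since both sides are determined up to the relevant integrality. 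So the edge-count hypothesis descends to $H$.

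First I would set up the induction carefully. The statement to prove by induction is: if $|V|\ge 3$, $G$ has no isolated or pendant vertices, and $|E| < \frac32|V|$, then $G$ is not generically universally rigid in $\R^1$ — and then observe that a graph with $|V|\ge 6$ and $|E| < \frac32|V|$ which \emph{does} have an isolated or pendant vertex is already not generically universally rigid for the trivial reason above, so the theorem reduces to this cleaner statement. For the inductive step: pick $w$ of degree $\le 2$; if $\deg(w) \le 1$ we are done by the trivial observation, so assume $\deg(w) = 2$ with neighbours $x,y$, and let $H = G - w$. If $H$ still has no isolated/pendant vertex and $|V(H)| \ge 3$, apply the inductive hypothesis to conclude $H$ is not generically universally rigid, then apply Theorem~\ref{2add} (whose conclusion is exactly that a degree-$2$ extension of a non-generically-universally-rigid graph is not generically universally rigid) to conclude $G$ is not generically universally rigid.

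The main obstacle is the bookkeeping around small cases and around the possibility that deleting $w$ \emph{creates} a pendant or isolated vertex, or drops $|V(H)|$ below the threshold where the inductive hypothesis applies cleanly; this is why the hypothesis $|V|\ge 6$ is there, giving room to absorb one or two deletions. Concretely, if $H = G-w$ has a new pendant vertex $z$ (so $z$ had degree $2$ in $G$, adjacent to $w$ and one other vertex), then $G$ restricted to $\{w, z, \dots\}$ contains a pendant path, and again $G$ fails to be generically universally rigid directly. Similarly one must check the base of the induction, say $4 \le |V| \le 6$ with the edge bound, by hand: with $|V|=6$ one needs $|E|\le 8$, and a short case analysis (every such graph has a vertex of degree $\le 2$ whose removal leaves a graph that is either small enough to be checked directly or again has a low-degree vertex) closes it. I would organize the argument so that all these degenerate possibilities funnel into the single trivial fact that a generically universally rigid graph on $\ge 3$ vertices has minimum degree $\ge 2$ and, more, cannot contain a vertex of degree $2$ whose two neighbours are its only attachment to an otherwise-flexible remainder — which is precisely what Theorem~\ref{2add} packages. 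The remaining routine verification is the arithmetic that $|E| < \frac32|V|$ transfers to $G - w$, and the finite check of the base case.
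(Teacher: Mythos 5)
Your overall strategy --- induction on $|V|$, locating a degree-$2$ vertex by the average-degree count, deleting it, and invoking Theorem~\ref{2add} --- is exactly the paper's, and your arithmetic showing that $|E|<\frac32|V|$ descends to $G-w$ is correct. (Your worry about $G-w$ acquiring pendant vertices is also a non-issue: if $G-w$ has minimum degree at most $1$ and at least $3$ vertices it is not generically universally rigid, so Theorem~\ref{2add} finishes that case immediately; there is no need to preserve a minimum-degree hypothesis through the induction.) The genuine gap is in your reformulated inductive statement: the claim that every graph with $|V|\ge 3$, minimum degree at least $2$, and $|E|<\frac32|V|$ fails to be generically universally rigid is \emph{false}. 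The triangle $K_3$ ($3$ edges $<4.5$), the graph $K_4-e$ ($5$ edges $<6$, and generically universally rigid as the paper notes), and $K_{3,2}+e$ ($7$ edges $<7.5$, universally rigid in every quasi-injective realization) are all counterexamples. The point is that $2|V|-3<\frac32|V|$ exactly when $|V|<6$, and the tight examples $K_{|V|-2,2}+e$ with $2|V|-3$ edges live precisely in that range; this is why the hypothesis $|V|\ge 6$ cannot be relaxed and why your plan to ``check the base of the induction, say $4\le|V|\le 6$'' against the $\frac32|V|$ bound would fail at $|V|=4$ and $|V|=5$.

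The repair is to keep the base of the induction at $|V|=6$ and prove it by a different mechanism, as the paper does: suppose a graph $H$ on $6$ vertices with at most $8$ edges were generically universally rigid; then it has minimum degree at least $2$ and, by the degree count, a vertex $v$ of degree exactly $2$; the contrapositive of Theorem~\ref{2add} gives that $H-v$ is still generically universally rigid, so one may strip a second degree-$2$ vertex and land on a $4$-vertex graph with at most $4$ edges that is generically universally rigid --- impossible, since such a graph either has a vertex of degree at most $1$ or is the four-cycle. Note that the intermediate edge bounds here are $8,6,4$ on $6,5,4$ vertices, i.e.\ $|E|\le 2|V|-4$, which is strictly tighter than $|E|<\frac32|V|$ for $|V|\le 5$; conflating the two is what makes your strengthened statement false. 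With the base case handled this way and the inductive step left as you have it (for $|V|\ge 7$, equivalently run by contradiction as in the paper: $G$ generically universally rigid forces $|E|\ge\frac32(|V|-1)+2>\frac32|V|$), the proof closes.
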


\begin{proof}
By induction on $|V|$. For $|V|=6$ it is easy to see that no graph with at most
eight edges is generically universally rigid: suppose $H$ is such a graph. Then $H$ must have a 
vertex $v$ of degree two. By Theorem \ref{2add} $H-v$ is also generically universally rigid. It also has
a vertex $w$ of degree two. Then $H-\{v,w\}$ has four vertices, at most four edges,
and, again by Theorem~\ref{2add}, is generically universally rigid, which is impossible.

Consider the inductive step and let $|V|\geq 7$. Let us suppose, for a contradiction,
that $G$ is generically universally rigid.
The edge count and the
fact that generically universally rigid graphs on at least three vertices have minimum degree at least two
implies that $G$ has a vertex $v$ of degree two. By Theorem~\ref{2add} $G-v$ is generically universally rigid.
It implies, by induction, that $G-v$ has at least $\frac{3}{2}(|V|-1)$ edges.
This gives $|E|\geq |E(G-v)|+2 \geq \frac{3}{2}(|V|-1) + 2 \geq \frac{3}{2}|V|$,
a contradiction.
\end{proof}


\section{Concluding remarks}

Instead of the genericity assumption on $p$, we may consider frameworks
whose vertices are in general position, or frameworks for which $p$
is injective, or quasi-injective. (Quasi-injective means that the endvertices of an edge cannot
be coincident.) These properties can also be used to define families of graphs $G$
by requiring that some (or every) $1$-dimensional general position (resp. injective, quasi-injective) realization of $G$ is
universally rigid. The characterization of these families, in most cases, is still open.
See \cite[Table 3]{G} for a good overview.

To motivate further research in this direction, we show that the bound in Question \ref{q} above
is tight if we replace generic by quasi-injective.

\begin{thm}
\label{qi}
Suppose that every quasi-injective realization of $G=(V,E)$ in $\R^1$ is
universally rigid. Then $|E|\geq 2|V|-3$.
\end{thm}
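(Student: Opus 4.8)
The plan is to build, for every graph $G$ with $|E|<2|V|-3$, an explicit quasi-injective realization that is flexible in some dimension, using a counting argument of Maxwell type combined with the edge-reduction philosophy of Theorem~\ref{2add}. First I would recall the standard fact from infinitesimal rigidity: a $1$-dimensional framework $(G,p)$ whose vertices all have distinct coordinates is infinitesimally flexible as soon as $|E|<|V|-1$, so the interesting range is $|V|-1\le |E|<2|V|-3$, and one needs the full strength of ``universal'' rigidity. The key observation is that a quasi-injective realization need not be injective: we are allowed to place non-adjacent vertices at coincident points. I would exploit this freedom aggressively.

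The main step is a reduction on low-degree vertices, mirroring the degree-$2$ argument. If $|E|<2|V|-3$ then $\sum_v \deg(v)<4|V|-6$, so $G$ has a vertex $v$ of degree at most $3$; in fact if every vertex had degree $\ge 4$ we'd have $|E|\ge 2|V|>2|V|-3$, so some vertex has degree $\le 3$. For a degree-$\le 2$ vertex I can invoke an argument in the spirit of Lemma~\ref{2ext} / Theorem~\ref{2add} directly. The crux is handling a degree-$3$ vertex $v$ with neighbours $x,y,z$. Here I would place $v$ at a point coincident with one of its non-neighbours (allowed, since quasi-injectivity only forbids coincidence along edges) — or, better, first delete $v$, obtain by induction a flexible quasi-injective realization $(G-v,q)$ in some $\R^{d'}$, and then show $v$ can be reinserted while keeping the framework flexible: three distance constraints on a single point in $\R^{d'}$ for $d'$ large still leave a positive-dimensional solution variety, and by genericity/dimension count one can choose $q(v)$ so that the extended framework is still not universally rigid and is still quasi-injective (ensuring $q(v)\ne q(x),q(y),q(z)$, which is a codimension-$\ge 1$ condition we can avoid). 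The base case $|V|\le$ a small constant is a finite check: with $|E|<2|V|-3$ and few vertices the graph is sparse enough that a flexible quasi-injective placement is exhibited by hand, e.g.\ using a stretched-cycle-plus-pendant construction or by noting $K_{4}-e$ is the extremal tight example and anything sparser fails.

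The hard part will be the degree-$3$ reinsertion step: unlike the degree-$2$ case in Theorem~\ref{2add}, three sphere constraints in $\R^{d'}$ can pin $q(v)$ down to a $(d'-3)$-dimensional set, and one must argue that for the \emph{flexible} family of $(G-v,q)$ the reinserted vertex does not accidentally rigidify the whole framework or force a coincidence along an edge. I expect this to require a careful parameter count: track the dimension of the configuration space of $(G-v,\cdot)$ modulo congruence (which is positive, by flexibility), intersect with the three quadratic constraints defining $q(v)$, and check the intersection remains positive-dimensional and generically avoids the bad (edge-coincidence) locus. A clean way to organize this is to pick the flexing of $(G-v,q)$ to move a suitable pair of vertices and verify that the induced deformation of the $v$-sphere-intersection cannot be instantaneously trivial. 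If that count is tight in a problematic configuration, the fallback is to choose, among all degree-$\le 3$ vertices, one whose neighbourhood is most favourable — e.g.\ a degree-$3$ vertex two of whose neighbours are non-adjacent, so that after deleting $v$ we may place $q(v)$ on the mediator locus of those two neighbours and retain an extra degree of freedom.
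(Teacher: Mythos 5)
Your proposal has a genuine gap at exactly the step you flag as ``the hard part'': the degree-$3$ reinsertion. The paper itself points out (in the remark after Theorem~\ref{2add}) that a degree-$3$ extension does \emph{not} preserve the property of being not universally rigid: $K_4-e$ is generically universally rigid, yet deleting one of its degree-$3$ vertices leaves a path on three vertices, which is flexible. So an induction of the form ``delete a vertex of degree at most $3$, apply the induction hypothesis, reinsert'' cannot work as stated --- the reinserted degree-$3$ vertex can rigidify the framework, and no parameter count on the sphere-intersection locus will save you, because the failure already occurs on four vertices. Your fallback (choosing a degree-$3$ vertex with two non-adjacent neighbours and using coincidences) is not developed far enough to tell whether it closes this hole. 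A secondary issue: the realization you must exhibit lives in $\R^1$ (the hypothesis quantifies over quasi-injective realizations of $G$ \emph{on the line}); your inductive step produces $(G-v,q)$ in some $\R^{d'}$ and reinserts $v$ there, which is not the object the theorem is about. Only the \emph{witness} of non-universal-rigidity may live in higher dimension.

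The paper's proof avoids all of this with a single structural fact: if $|E|\le 2|V|-4$ then, by a result of Chen and Yu, $G$ has an \emph{independent vertex separator} $S$ (so $G-S$ is disconnected and $S$ spans no edges). One then builds a quasi-injective $p$ in $\R^1$ sending all of $S$ to one point $s$ --- legal precisely because $S$ is independent, which is where quasi-injectivity (as opposed to injectivity) is exploited --- and obtains an equivalent, non-congruent realization by rotating one component of $G-S$ about $s$. This shows $(G,p)$ is not even globally rigid. Your instinct to ``exploit coincidences of non-adjacent vertices aggressively'' is exactly the right one; what is missing is the combinatorial input (the independent separator) that tells you \emph{which} vertices to collapse so that the resulting framework visibly flexes.
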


\begin{proof}
Let us assume that $|E|\leq 2|V|-4$. By a result of Chen and Yu \cite{Chen}
this implies that $G$ has an independent vertex separator, that is, a set $S\subset V$ for
which $G-S$ is disconnected and $G$ has no edges with both endvertices in $S$.
We can use this fact to construct a quasi-injective $1$-dimensional realization $(G,p)$ which is
not universally rigid (in fact not even globally rigid): we choose a quasi-injective map $p$ so that all vertices
of $S$ are mapped to the same point $s$ on the line. Then an equivalent, non-congruent realization $(G,q)$ can be
obtained by rotating the vertices of some connected component of $G-S$ about $s$.
\end{proof}

Since every quasi-injective realization of the graphs $K_{|V|-2,2}+e$ on the line is universally rigid by Lemma~\ref{2ext},
the bound in Theorem~\ref{qi} is tight.

\section*{Acknowledgements}

The results of this paper were obtained in the framework of an undergraduate research
experience project of BSM (Budapest Semesters in Mathematics), led by the second author.
We thank Owen Cardwell and Keegan Stump
for some useful comments.

The second author was supported in part by the Hungarian Scientific
Research Fund grant no. K135421.

\end{document}